\documentclass{amsart}

\usepackage{amsfonts}
\usepackage{amssymb}
\usepackage{amsthm}
\usepackage{eucal}
\usepackage{rotating}

\theoremstyle{plain}
\newtheorem{theorem}{Theorem}[section]

\theoremstyle{definition}

\theoremstyle{remark}

\newcommand{\C}{\mathcal{C}}

\begin{document}
\title[Tutte-Coxeter graph]{A characterization of Tutte-Coxeter graph}

\author{A. Mohammadian}
\address{Department of Pure Mathematics, Ferdowsi University of Mashhad, Mashhad, Iran}
\email{abbasmohammadian1248@gmail.com}

\author{M. Farrokhi D. G.}
\address{Institute for Advanced Studies in Basic Sciences (IASBS), and the Center for Research in Basic Sciences and Contemporary Technologies, IASBS, P.O.Box 45195-1159, Zanjan 66731-45137, Iran}
\email{m.farrokhi.d.g@gmail.com, farrokhi@iasbs.ac.ir}

\keywords{Tutte-Coxeter graph, Petersen graph, vertex-transitive, edge-transitive, automorphism} 
\subjclass[2000]{Primary 05C75; Secondary 05C30, 05E18.}
\date{}

\begin{abstract}
We give a natural generalization of the Tutte-Coxeter graph in a natural way and prove that the Tutte-Coxeter graph is the only vertex-transitive (edge-transitive) graph among all generalized Tutte-Coxeter graphs.
\end{abstract}
\maketitle
%==================================================
\section{introduction}
In 1969, a class of generalized Petersen graphs was defined by Watkins \cite{mew} as follows: for positive integers $n\geq 3$ and $1\leq k<n/2$, the generalized Petersen graph $P(n,k)$ is defined on a set of $2n$ vertices $u_0,u_1,\ldots,u_{n-1},v_0,v_1,\ldots,v_{n-1}$ whose edges are $\{u_i,u_{i+1}\}$, $\{u_i,v_i\}$ and $\{v_i,v_{i+k}\}$, where $i$ ranges over $\{1,2,\ldots,n\}$ and that all indices are taken modulo $n$. These graphs were studied earlier by Coxeter \cite{hsmc} and Robertson \cite{nr} in the special cases where $n$ and $k$ are relatively prime, and $k=2$, respectively.

The automorphism group of generalized Petersen graphs are determined completely in \cite{rf-jeg-mew}. It is shown that, with the exception of the dodecahedron $P(10,2)$, the graph $P(n,k)$ is vertex-transitive if and only if $k^2\equiv\pm1$ (mod $n$). Furthermore, $P(n,k)$ is a Cayley graph if and only if $k^2\equiv1$ (mod $n$), see \cite{ml,rn-ms}. Also, it is shown in \cite{rf-jeg-mew} that $P(n,k)$ is edge-transitive if and only if
\[(n,k)\in\{(4,1),(5,2),(8,3),(10,2),(10,3),(12,5),(24,5)\}.\]
For further results on generalized Petersen graphs we may refer the interested reader to Alspach \cite{ba}, and Castagna and Prins \cite{fc-gp}.

The aim of this paper is to consider the same problem by defining a new class of cubic graphs arising from the Tutte-Coxeter graph. A \textit{generalized Tutte-Coxeter graph} with respect to positive integers $n\geq3$ ($n$ even) and $1\leq k<n/2$, denoted by $TC(n,k)$, is defined on a set of $3n$ vertices 
\[a_0,a_1,\ldots,a_{n-1},b_0,b_1,\ldots,b_{n-1},c_0,c_1,\ldots,c_{n-1}\]
whose its edges are
\begin{flalign*}
&&\{a_i,a_{i+1}\},\{a_i,b_i\},\{b_i,b_{i+n/2}\},\{b_i,c_i\}\ \mbox{and}\ \{c_i,c_{i+k}\},&&
\end{flalign*}
where $i$ ranges over $\{1,2,\ldots,n\}$. Here, all the indices are taken modulo $n$. We note that  $T(10,3)$ is the well-known Tutte-Coxeter graph. The same as for generalized Petersen graph, it is natural to ask which generalized Tutte-Coxeter graphs are vertex-transitive or edge-transitive? We shall prove that the only vertex-transitive (edge-transitive) graph among generalized Tutte-Coxeter graphs is the Tutte-Coxeter graph giving rise to a characterization of the Tutte-Coxeter graph.
%==================================================
\section{The Generalized Tutte-Coxeter Graph}
We begin with naming the vertices and egdes of the generalized Tutte-Coxeter graph as follows:
\begin{itemize}
\item The sets $\{a_0,a_1,\ldots,a_{n-1}\}$, $\{b_0,b_1,\ldots,b_{n-1}\}$ and $\{c_0,c_1,\ldots,c_{n-1}\}$ of vertices denote the outer, middle and inner vertices, respectively.
\item The sets $\{\{a_i,a_{i+1}\}:1\leq i\leq n\}$,$\{\{a_i,b_i\}:1\leq i\leq n\}$, $\{\{b_i,b_{i+n/2}\}:1\leq i\leq n\}$, $\{\{b_i,c_i\}:1\leq i\leq n\}$ and $\{\{c_i,c_{i+k}\}:1\leq i\leq n\}$ of edges denote the outer edges, spokes of type 1, middle edges, spokes of type 2 and inner edges, respectively.
\end{itemize}

It is easy to see that, the subgraph induced by inner vertices is a union of $d$ disjoint $(n/d)$-cycles, where $d=\gcd(n,k)$.

For a given cycle $C$ of $TC(n,k)$, let $ov(C)$, $mv(C)$ and $iv(C)$ be the number of outer, middle and inner vertices, and let $oe(C)$, $s_1e(C)$, $me(C)$, $s_2e(C)$ and $ie(C)$ be the number of outer edges, spokes of type 1, middle edges, spokes of type 2 and inner edges of $C$, respectively. Let $\C_l$ be the set of all $l$-cycles of $TC(n,k)$ and put
\begin{flalign*}
&&OV(l)&=\sum_{C\in\C_l}ov(C),&OE(l)&=\sum_{C\in\C_l}oe(C),&&\\
&&MV(l)&=\sum_{C\in\C_l}mv(C),&S_1E(l)&=\sum_{C\in\C_l}s_1e(C),&&\\
&&IV(l)&=\sum_{C\in\C_l}iv(C),&ME(l)&=\sum_{C\in\C_l}me(C),&&\\
&&&&S_2E(l)&=\sum_{C\in\C_l}s_2e(C),&&\\
&&&&IE(l)&=\sum_{C\in\C_l}ie(C),&&
\end{flalign*}
for all $l\geq3$. It is clear that the graph $TC(n,k)$ is vertex-transitive only if $OV(l)=MV(l)=IV(l)$ for all $l\geq3$. Similarly, if the the graph $TC(n,k)$ is edge-transitive, then we must have $OE(l)=S_1E(l)=2ME(l)=S_2E(l)=IE(l)$ for all $l\geq3$. 

By evaluating the quantities $OV(l)$, $MV(l)$, $IV(l)$ and $OE(l)$, $S_1E(l)$, $ME(l)$, $S_2E(l)$, $IE(l)$, we will obtain all vertex-transitive and edge-transitive graphs among generalized Tutte-Coxeter graphs, respectively, when $l$ takes only the values $6$, $7$ and $8$.

First, we consider vertex-transitive graphs.
%--------------------------------------------------
\begin{theorem}\label{vertex-transitive}
The graph $TC(n,k)$ is vertex-transitive if and only if $(n,k)=(10,3)$. 
\end{theorem}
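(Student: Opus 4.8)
The ``if'' direction is immediate: $TC(10,3)$ is the Tutte--Coxeter graph, which is well known to be vertex-transitive (indeed distance-transitive), so I concentrate on the converse. Suppose $TC(n,k)$ is vertex-transitive; then $OV(l)=MV(l)=IV(l)$ for every $l$, and I plan to exploit this only for $l\in\{6,7,8\}$. To evaluate these quantities I would classify every cycle of length at most $8$ according to which of the three vertex-classes --- the outer vertices $A$, the middle vertices $B$, the inner vertices $C$ --- it meets. The organizing observation is that the spokes of type $1$ form an edge-cut separating $A$ from $B\cup C$, while the spokes of type $2$ form an edge-cut separating $A\cup B$ from $C$; consequently every cycle uses an even number of spokes of each type, and a cycle of length at most $8$ uses at most two of each. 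This makes the list of possible short-cycle shapes finite.

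Carrying out the classification, I expect the following families, for each of which I would record the triple $(ov,mv,iv)$ and the number of cycles. (i) A cycle inside $A$ is the outer $n$-cycle, relevant only when $n\le 8$. (ii) A cycle inside $C$ lies in a single inner component and has length $n/d$ with $d=\gcd(n,k)$, relevant only when $n/d\le 8$. (iii) A cycle meeting $A$ and $B$ but not $C$ and using two spokes of type $1$ is a middle edge $\{b_j,b_{j+n/2}\}$ together with an arc of the outer cycle joining $a_j$ to $a_{j+n/2}$; both such arcs have length $n/2$, so the cycle has length exactly $n/2+3$; using four spokes of type $1$ one finds a couple of explicit $8$-cycles built from two middle edges. (iv) A cycle meeting $B$ and $C$ but not $A$ and using two spokes of type $2$ is a middle edge together with an inner path joining $c_j$ to $c_{j+n/2}$ --- which exists exactly when $n/d$ is even --- of length $t+3$, where $t$ is the inner distance between the two antipodal positions; using four spokes of type $2$ one always obtains the $8$-cycle on $b_i,c_i,c_{i+k},b_{i+k},b_{i+k+n/2},c_{i+k+n/2},c_{i+n/2},b_{i+n/2}$ together with its analogues, so that $8$-cycles avoiding the outer class always exist. (v) A cycle meeting all three classes and using two spokes of each type has the form $a_p-(\text{outer arc})-a_q-b_q-c_q-(\text{inner path})-c_p-b_p-a_p$ with $p\equiv q\pmod d$, of length $4$ plus the lengths of the outer arc and the inner path, the minimum $k+5$ occurring at $q=p\pm k$; configurations that in addition use middle edges survive only for special parameters, chiefly $k=1$. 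Throughout I would watch for coincidences arising for small $n$ or for parameters satisfying relations such as $2k\equiv n/2\pmod n$.

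Feeding this data into $OV(l)=MV(l)=IV(l)$ for $l=6,7,8$ yields a small system of conditions on $n$ and $k$. The contributions of the outer $n$-cycle and the inner $n/d$-cycles already constrain $n$ and $n/d$; the families in (iii) and (iv) then pin down $n/2$ and $k$, since matching the $8$-cycles that avoid outer vertices against those that meet them, and doing the same at lengths $6$ and $7$, leaves only a narrow range; the few surviving candidates are eliminated by direct substitution. The point of the design is that the three lengths $l\in\{6,7,8\}$ already over-determine the pair $(n,k)$, so that $(n,k)=(10,3)$ is the only solution.

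The principal obstacle is the cycle enumeration itself: one must be certain it is exhaustive and must correctly handle the numerous degenerate configurations --- repeated vertices, the case $k=2$, the parity of $n/d$, and the small values $n\in\{4,6,8\}$ --- because overlooking a single family of short cycles would corrupt the counts. The even-spoke-count observation is what keeps the enumeration finite and well organized, but the case-by-case bookkeeping for lengths $6$, $7$, and $8$ is where the real work lies.
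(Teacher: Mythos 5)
Your strategy is exactly the paper's: use $OV(l)=MV(l)=IV(l)$ for $l\in\{6,7,8\}$ as a necessary condition, enumerate all short cycles by which vertex classes they meet (your observation that the two families of spokes are edge-cuts, forcing an even number of spokes of each type in every cycle, is a clean way to organize the enumeration that the paper leaves implicit). But the attempt stops precisely where the paper's proof begins. The claims ``feeding this data in yields a small system of conditions,'' ``the few surviving candidates are eliminated by direct substitution,'' and ``the three lengths over-determine $(n,k)$'' are assertions of the conclusion, not arguments: the entire content of the paper's proof is that verification, carried out via Tables 1--3 with the case split $k\geq3$ (where the $8$-cycle families occurring only under the relations $n=3k\pm1$, $2k+2$, $2k+4$, $4k\pm2$, $8k$, $3n=8k$, $10k$, $3n=10k$, plus the universal types $(11)$ and $(12)$, never balance the three vertex classes), $k=1$ (where the $8$-cycle count is balanced at $n=8$, so one must pass to $7$-cycles, which give $OV(7)=IV(7)=5n$ but $MV(7)=4n$), and $k=2$ (settled by $7$-cycles). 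None of this is present or even sketched quantitatively in your write-up, so as it stands the proof has not been given.

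Moreover, one detail you do state suggests the enumeration would go wrong if executed as described: you say that three-class cycles which also use a middle edge ``survive only for special parameters, chiefly $k=1$.'' In fact they occur for arbitrary $k$ exactly under the relations $n=2k+4$ and $n=4k\pm2$ (the paper's types $9_\pm$, $10_\pm$, $10'_\pm$), and these families are essential: for instance $TC(10,3)$ itself satisfies $n=4k-2$, and it is the contribution of the corresponding $8$-cycles that makes the counts balance there while they fail elsewhere. Misplacing these families (or the analogous two-class families with one middle edge and an inner path, whose existence depends on $\gcd(n,k)\mid n/2$) would corrupt exactly the counts on which the whole argument rests. So the missing step is not a technicality: you need the explicit, exhaustive table of cycle types with their multiplicities and class incidences, and the explicit check, case by case in $(n,k)$, that equality of the three vertex counts forces $(n,k)=(10,3)$.
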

\begin{proof}
If $(n,k)=(10,3)$, then $TC(n,k)$ is the Tutte-Coxeter graph and we are done. Now suppose that $(n,k)\neq(10,3)$. Clearly, $TC(n,k)$ has $8$-cycles of types $(11)$ and $(12)$ (see Table 1). If $k\geq3$, then $TC(n,k)$ has $8$-cycles of types different from $(11)$ and $(12)$ if and only if $(n,k)$ is of the given forms in Table 2, in which all $8$-cycles are described. In all cases, the equation $OV(8)=MV(8)=IV(8)$ is never satisfied so that $TC(n,k)$ is not vertex-transitive. Hence $k=1$ or $2$.

If $k=1$ then by invoking Table 1, one can easily see that the equation $OV(8)=MV(8)=IV(8)$ holds only if $n=8$. But then $T(8,1)$ has only two families of different $7$-cycles
\[\{a_i,a_{i+1},a_{i+2}, a_{i+3},a_{i+4},b_{i+n/2},b_i\}\]
and
\[\{c_i,c_{i+1},c_{i+2}, c_{i+3},c_{i+4},b_{i+n/2},b_i\}\]
for $i=1,\ldots,n$, from which it follows that $OV(7)=IV(7)=5n$ and $MV(7)=4n$. Hence, $TC(n,1)$ is not vertex-transitive.

Finally, if $k=2$ then one can easily see, by utilizing Table 3, that the equation $OV(7)=MV(7)=IV(7)$ is never satisfied so that $TC(n,2)$ is not vertex-transitive. Then proof is complete.
\end{proof}
%--------------------------------------------------
\begin{theorem}\label{edge-transitive}
The graph $TC(n,k)$ is edge-transitive if and only if $(n,k)=(10,3)$. 
\end{theorem}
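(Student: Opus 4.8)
The plan is to re-run the argument of Theorem~\ref{vertex-transitive} with the edge census of short cycles in place of the vertex census. One direction is immediate: when $(n,k)=(10,3)$ the graph $TC(n,k)$ is the Tutte-Coxeter graph, which is well known to be edge-transitive (indeed distance-transitive). For the converse, I would start from the necessary condition recorded just before Theorem~\ref{vertex-transitive}: if $TC(n,k)$ is edge-transitive then
\[
OE(l)=S_1E(l)=2\,ME(l)=S_2E(l)=IE(l)\qquad\text{for all }l\ge3,
\]
the coefficient $2$ on $ME(l)$ being forced by there being $n/2$ middle edges but $n$ edges of each of the other four types. Since the Tutte-Coxeter graph has girth $8$, for $(n,k)=(10,3)$ all of these quantities vanish when $l\in\{6,7\}$, so nothing can be extracted there; for every other admissible pair the task is to exhibit an $l\in\{6,7,8\}$ at which the displayed chain of equalities fails.

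I would then split into the same three cases as before. For $k\ge3$ the decisive length is $l=8$: by Table~1 the $8$-cycles of types $(11)$ and $(12)$ are present for all such $(n,k)$, and by Table~2 any further $8$-cycle occurs only for the finitely many exceptional forms tabulated there, where all $8$-cycles are written out; so one reads $OE(8)$, $S_1E(8)$, $ME(8)$, $S_2E(8)$, $IE(8)$ off the tables and checks in each case that $OE(8)=S_1E(8)=2ME(8)=S_2E(8)=IE(8)$ cannot hold. This is the same finite verification already done for the vertex census, with edges of each type tallied instead of vertices. For $k=2$ one invokes Table~3 to list the $6$- and $7$-cycles and checks that $OE=S_1E=2ME=S_2E=IE$ breaks at $l=6$ or $l=7$. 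For $k=1$, Table~1 shows (just as $OV(8)=MV(8)=IV(8)$ did) that the edge equalities at $l=8$ force $n=8$; and for $TC(8,1)$ the two families of $7$-cycles displayed in the proof of Theorem~\ref{vertex-transitive}, namely $\{a_i,a_{i+1},a_{i+2},a_{i+3},a_{i+4},b_{i+n/2},b_i\}$ and $\{c_i,c_{i+1},c_{i+2},c_{i+3},c_{i+4},b_{i+n/2},b_i\}$ for $i=1,\ldots,n$, contain respectively four outer edges (resp.\ four inner edges), two spokes of type~$1$ (resp.\ type~$2$) and one middle edge each; hence $OE(7)=IE(7)=2ME(7)=4n$ while $S_1E(7)=S_2E(7)=2n$, so $OE(7)\ne S_1E(7)$ and $TC(8,1)$ is not edge-transitive. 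This settles every case.

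All the content is in the second step, and it is pure combinatorial bookkeeping; the hard part will be the same one already faced in Theorem~\ref{vertex-transitive}, namely being certain that Tables~1--3 genuinely list \emph{all} $6$-, $7$- and $8$-cycles, with particular care for the ``accidental'' short cycles that surface only at the smallest admissible values of $n$, when the outer $n$-cycle, the middle edges, or the inner cycles begin to overlap. One mild extra subtlety in the edge version is the coefficient $2$ on $ME$: any family of short cycles that avoids the middle edges (of which there are many, for instance those lying in the inner graph together with the spokes) contributes equally to $OE$, $S_1E$, $S_2E$ and $IE$ but nothing to $ME$, so edge-transitivity would require a delicate balance between middle-edge-using and middle-edge-avoiding cycles, and one must confirm this balance never occurs outside $(10,3)$. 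Finally I would note that this detour cannot be avoided by deducing vertex-transitivity from edge-transitivity and quoting Theorem~\ref{vertex-transitive}, since that implication fails for cubic graphs in general (the Gray graph is cubic, edge-transitive and not vertex-transitive), so the cycle count really must be redone for the edges.
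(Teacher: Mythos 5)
Your proposal is correct and follows essentially the paper's own route: the same necessary condition $OE(l)=S_1E(l)=2ME(l)=S_2E(l)=IE(l)$, the same case split $k\ge3$, $k=2$, $k=1$, and the same reliance on Tables 1--3 for the cycle census at $l\in\{6,7,8\}$. The only deviation is in the case $k=1$, where the paper disposes of all $n\neq 6$ at once via the $6$-cycles $\{a_i,a_{i+1},b_{i+1},c_{i+1},c_i,b_i\}$ (and uses $8$-cycles for $n=6$), while you first use the $8$-cycle census to reduce to $n=8$ and then the two $7$-cycle families of $TC(8,1)$; both variants check out (for $TC(8,1)$ one indeed gets $OE(7)=IE(7)=2ME(7)=4n$ but $S_1E(7)=S_2E(7)=2n$), so this is only a cosmetic difference.
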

\begin{proof}
If $(n,k)=(10,3)$, then $TC(n,k)$ is the Tutte-Coxeter graph and so is edge-transitive. Hence assume that $(n,k)\neq(10,3)$. The same as in the proof of Theorem \ref{vertex-transitive}, it is easy to see that the equation $OE(8)=S_1E(8)=2ME(8)=S_2E(8)=IE(8)$ is never satisfied when $k\geq3$. Thus, we just consider the cases $k=1$ and $k=2$.

If $k=1$ then the only possible $6$-cycles in $TC(n,1)$ are $\{a_i,a_{i+1},b_{i+1},c_{i+1},c_i,b_i\}$  for $i=1,\ldots,n$ whenever $n\neq6$. Hence $OE(6)=IE(6)=n$, $ME(6)=0$ and $S_1E(6)=S_2E(6)=2n$, which imply that $TC(n,1)$ is not edge-transitive. In the case where $n=6$,  we have $8$-cycles of types $(1)$, $(9_+)$, $(9_-)$, $(10_+)$, $(10_-)$, $(11)$ and $(12)$ as illustrated in Table 1. Therefore $OE(8)=IE(8)=9n$, $ME(8)=6n$ and $S_2E(8)=S_1E(8)=4n$, and again $TC(n,1)$ is not edge-transitive.

Finally, assume that $k=2$. If $n\neq 6, 8, 14$ and $16$, then $TC(n,2)$ has only the $7$-cycles  $\{a_i,a_{i+1},a_{i+2},b_{i+2},c_{i+2},c_i,b_i\}$ for $i=1,\ldots,n$. This shows that $OE(7)=S_1E(6)=S_2E(6)=2n$, $ME(7)=0$ and $IE(7)=n$, hence $TC(n,2)$ is not edge-transitive. Invoking Table 3 in the cases where $n=6,8,14$ or $16$, it yields that $OE(7)\neq IE(7)$, that is, $TC(n,2)$ is not edge-transitive. The proof is complete.
\end{proof}
%==================================================

%==================================================
\hspace{-1.5cm}
\begin{sideways}
\begin{minipage}{18cm}
\begin{center}
Table 1: $k\neq2$\\
{\tiny\begin{tabular}{|c|c|c|c|ccc|ccccc|}
\hline
Type&$8$-Cycles&Conditions&\#&$ov$&$mv$&$iv$&$oe$&$s_1e$&$me$&$s_2e$&$ie$\\
\hline
$1$&$\{a_i,a_{i+1},a_{i+2},b_{i+2},c_{i+2},c_{i+1},c_i,b_i\}$&$k=1$&$n$&$3$&$2$&$3$&$2$&$2$&$0$&$2$&$2$\\
\hline
$2$&$\{a_i,a_{i+1},a_{i+2},a_{i+3},b_{i+3},c_{i+3},c_i,b_i\}$&$k=3$ or $n-k=3$&$n$&$4$&$2$&$2$&$3$&$2$&$0$&$2$&$1$\\
\hline
$3$&$\{a_i,a_{i+1},b_{i+1},c_{i+1},c_{(i+1)+k},c_{(i+1)+2k},c_i,b_i\}$&$n=3k+1$&$n$&$2$&$2$&$4$&$1$&$2$&$0$&$2$&$3$\\
\hline
$3'$&$\{a_{i+1},a_i,b_i,c_i,c_{i+k},c_{i+2k},c_{i+1},b_{i+1}\}$&$n=3k-1$&$n$&$2$&$2$&$4$&$1$&$2$&$0$&$2$&$3$\\
\hline
$4$&$\{a_i,a_{i+1},a_{i+2},b_{i+2},c_{i+2},c_{(i+2)+k},c_i,b_i\}$&$n=2k+2$&$n$&$3$&$2$&$3$&$2$&$2$&$0$&$2$&$2$\\
\hline
$5$&$\{a_i,a_{i+1},a_{i+2},a_{i+3},a_{i+4},a_{i+5},a_{i+6},a_{i+7}\}$&$n=8$&$1$&$8$&$0$&$0$&$8$&$0$&$0$&$0$&$0$\\
\hline
$6$&$\{c_i,c_{i+k},c_{i+2k},c_{i+3k},c_{i+4k},c_{i+5k},c_{i+6k},c_{i+7k}\}$&$n=8k$&$k$&$0$&$0$&$8$&$0$&$0$&$0$&$0$&$8$\\
\hline
$6'$&$\{c_i,c_{i+k},c_{i+2k},c_{i+3k},c_{i+4k},c_{i+5k},c_{i+6k},c_{i+7k}\}$&$3n=8k$&$n/8$&$0$&$0$&$8$&$0$&$0$&$0$&$0$&$8$\\
\hline
$7$&$\{a_i,a_{i+1},a_{i+2},a_{i+3},a_{i+4},a_{i+5},b_{i+5},b_i\}$&$n=10$&$n$&$6$&$2$&$0$&$5$&$2$&$1$&$0$&$0$\\
\hline
$8$&$\{b_i,c_i,c_{i+k},c_{i+2k},c_{i+3k},c_{i+4k},c_{i+5k},b_{i+n/2}\}$&$n=10k$&$n$&$0$&$2$&$6$&$0$&$0$&$1$&$2$&$5$\\
\hline
$8'$&$\{b_i,c_i,c_{i+k},c_{i+2k},c_{i+3k},c_{i+4k},c_{i+5k},b_{i+n/2}\}$&$3n=10k$&$n$&$0$&$2$&$6$&$0$&$0$&$1$&$2$&$5$\\
\hline
$9_+$&$\{a_i,a_{i+1},a_{i+2},b_{i+2},c_{i+2},c_{(i+2)+k)},b_{i+n/2},b_i\}$&$n=2k+4$&$n$&$3$&$3$&$2$&$2$&$2$&$1$&$2$&$1$\\
\hline
$9_-$&$\{a_i,a_{-(i+1)},a_{-(i+2)},b_{-(i+2)},c_{-(i+2)},c_{-((i+2)+k)},b_{i+n/2},b_i\}$&$n=2k+4$&$n$&$3$&$3$&$2$&$2$&$2$&$1$&$2$&$1$\\
\hline
$10_+$&$\{a_i,a_{i+1},b_{i+1},c_{i+1},c_{(i+1)+k},c_{(i+1)+2k},b_{i+n/2},b_i\}$&$n=4k+2$&$n$&$2$&$3$&$3$&$1$&$2$&$1$&$2$&$2$\\
\hline
$10_-$&$\{a_i,a_{-(i+1)},b_{-(i+1)},c_{-(i+1)},c_{-((i+1)+k)},c_{-((i+1)+2k)},b_{i+n/2},b_i\}$&$n=4k+2$&$n$&$2$&$3$&$3$&$1$&$2$&$1$&$2$&$2$\\
\hline
$10'_+$&$\{a_i,a_{i+1},b_{i+1},c_{i+1},c_{((i+1)-k)},c_{((i+1)-2k)},b_{i+n/2},b_i\}$&$n=4k-2$&$n$&$2$&$3$&$3$&$1$&$2$&$1$&$2$&$2$\\
\hline
$10'_-$&$\{a_i,a_{-(i+1)},b_{-(i+1)},c_{-(i+1)},c_{k-(i+1)},c_{2k-(i+1)},b_{i+n/2},b_i\}$&$n=4k-2$&$n$&$2$&$3$&$3$&$1$&$2$&$1$&$2$&$2$\\
\hline
$11$&$\{a_i,a_{i+1},b_{i+1},b_{(i+1)+n/2},a_{(i+1)+n/2},a_{i+n/2},b_{i+n/2},b_i\}$&$n\geq 4$&$n/2$&$4$&$4$&$0$&$2$&$4$&$2$&$0$&$0$\\
\hline
$12$&$\{b_i,c_i,c_{i+k},b_{i+k},b_{(i+k)+n/2},c_{(i+k)+n/2},c_{i+n/2},b_{i+n/2}\}$&$n\geq 4$&$n/2$&$0$&$4$&$4$&$0$&$0$&$2$&$4$&$2$\\
\hline
\end{tabular}}
\\\vspace{0.25cm}
Table 2: $k\neq1,2$\\
{\tiny\begin{tabular}{|c|c|c|ccc|ccccc|}
\hline
The Values $n$&The Values $k$&Type of $8$-circuits&$OV$&$MV$&$IV$&$OE$&$S_1E$&$ME$&$S_2E$&$IE$\\
\hline
$n=3k+1$&$k\neq3$&$(3)$,$(11)$,$(12)$&$4n$&$6n$&$6n$&$2n$&$4n$&$2n$&$4n$&$4n$\\
\hline
$n=3k-1$&$k\neq3,5$&$(3')$,$(11)$,$(12)$&$4n$&$6n$&$6n$&$2n$&$4n$&$2n$&$4n$&$4n$\\
\hline
$n=2k+2$&$k\neq3$&$(4)$,$(11)$,$(12)$&$5n$&$6n$&$5n$&$3n$&$4n$&$2n$&$4n$&$3n$\\
\hline
$n=8$&$k=3$&$(2)$,$(3')$,$(4)$,$(5)$,$(6')$,$(11)$,$(12)$&$12n$&$10n$&$12n$&$8n$&$8n$&$2n$&$8n$&$8n$\\
\hline
$n=8k$&$k\neq3$&$(6)$,$(11)$,$(12)$&$2n$&$4n$&$3n$&$n$&$2n$&$2n$&$2n$&$2n$\\
\hline
$n=8k$&$k=3$&$(2)$,$(6)$,$(11)$,$(12)$&$6n$&$6n$&$5n$&$4n$&$4n$&$2n$&$4n$&$3n$\\
\hline
$3n=8k$&$k\neq3,6$&$(6')$,$(11)$,$(12)$&$2n$&$4n$&$3n$&$n$&$2n$&$2n$&$2n$&$2n$\\
\hline
$n=10$&$k=4$&$(4)$,$(7)$,$(11)$,$(12)$&$11n$&$8n$&$5n$&$8n$&$6n$&$3n$&$4n$&$3n$\\
\hline
$n=10k$&$k\neq3$&$(8)$,$(11)$,$(12)$&$2n$&$6n$&$8n$&$n$&$2n$&$3n$&$4n$&$6n$\\
\hline
$n=10k$&$k=3$&$(2)$,$(8)$,$(11)$,$(12)$&$6n$&$8n$&$10n$&$4n$&$4n$&$3n$&$6n$&$7n$\\
\hline
$3n=10k$&$k\neq3$&$(8')$,$(11)$,$(12)$&$2n$&$6n$&$8n$&$n$&$2n$&$3n$&$4n$&$6n$\\
\hline
$n=2k+4$&$k\neq3,5,6$&$(9_-)$,$(9_+)$,$(11$),$(12)$&$8n$&$10n$&$6n$&$5n$&$6n$&$4n$&$6n$&$3n$\\
\hline
$n=14$&$k=5$&$(3')$,$(9_-)$,$(9_+)$,$(11)$,$(12)$&$10n$&$12n$&$10n$&$6n$&$8n$&$4n$&$8n$&$6n$\\
\hline
$n=16$&$k=6$&$(6')$,$(9_-)$,$(9_+)$,$(11)$,$(12)$&$8n$&$10n$&$7n$&$5n$&$6n$&$4n$&$6n$&$4n$\\
\hline
$n=4k+2$&$k\neq3$&$(10_-)$,$(10_+)$,$(11)$,$(12)$&$6n$&$10n$&$8n$&$3n$&$6n$&$4n$&$6n$&$5n$\\
\hline
$n=4k+2$&$k=3$&$(2)$,$(10_-)$,$(10_+)$,$(11)$,$(12)$&$10n$&$12n$&$10n$&$6n$&$8n$&$4n$&$8n$&$6n$\\
\hline
$n=4k-2$&$k\neq3$&$(10'_-)$,$(10'_+)$,$(11)$,$(12)$&$6n$&$10n$&$8n$&$3n$&$6n$&$4n$&$6n$&$5n$\\
\hline
\end{tabular}}
\\\vspace{0.25cm}
Table 3: $k=2$\\
{\tiny\begin{tabular}{|c|c|c|c|ccc|ccccc|}
\hline
Type&$7$-Cycles&Conditions&\#&$ov$&$mv$&$iv$&$oe$&$s_1e$&$me$&$s_2e$&$ie$\\
\hline
$1$&$\{a_i,a_{i+1},a_{i+2}, b_{i+2},c_{i+2},c_i,b_i\}$&$n=6,8,14$ or $16$&$n$&$3$&$2$&$2$&$2$&$2$&$0$&$2$&$1$\\
\hline
$2$&$\{a_i,a_{i+1},b_{i+1},c_{i+1},c_{i+3},b_{i+3},b_i\}$&$n=6$&$n$&$2$&$3$&$2$&$1$&$2$&$1$&$2$&$1$\\
\hline
$2'$&$\{a_i,a_{-(i+1)},b_{-(i+1)},c_{-(i+1)},c_{-(i+3)},b_{i+3},b_i\}$&$n=6$&$n$&$2$&$3$&$2$&$1$&$2$&$1$&$2$&$1$\\
\hline
$3$&$\{a_i,a_{i+1},a_{i+2},a_{i+3},a_{i+4},b_{i+4},b_i\}$&$n=8$&$n$&$5$&$2$&$0$&$4$&$2$&$1$&$0$&$0$\\
\hline
$4$&$\{c_i,c_{i+2},c_{i+4},c_{i+6},c_{i+8},c_{i+10},c_{i+12}\}$&$n=14$&$n/7$&$0$&$0$&$7$&$0$&$0$&$0$&$0$&$7$\\
\hline
$5$&$\{c_{i+1},c_{i+3},c_{i+5},c_{i+7},c_{i+9},c_{i+11},c_{i+13}\}$&$n=14$&$n/7$&$0$&$0$&$7$&$0$&$0$&$0$&$0$&$7$\\
\hline
$6$&$\{b_i, c_i,c_{i+2},c_{i+4},c_{i+6},c_{i+8},b_{i+8}\}$&$n=16$&$n$&$0$&$2$&$5$&$0$&$0$&$1$&$2$&$4$\\
\hline
\end{tabular}}
\end{center}
\end{minipage}
\end{sideways}
\end{document}